\documentclass[12pt,twoside]{amsart}
\usepackage{amssymb}
\usepackage{amscd}
\usepackage{amsmath}
\usepackage[all]{xy}

\title{On canonical bundle formulae and subadjunctions}
\author{Osamu Fujino and Yoshinori Gongyo} 
\date{2010/9/12, version 1.17}
\subjclass[2000]{Primary 14N30; Secondary 14E30.}
\keywords{canonical bundle formula, adjunction formula, subadjunction, 
log Fano varieties, non-vanishing theorem, log canonical centers}
\address{Department of Mathematics, Faculty of Science, 
Kyoto University, Kyoto 606-8502, Japan}
\email{fujino@math.kyoto-u.ac.jp}
\address{Graduate School of Mathematical Sciences, 
The University of Tokyo, 3-8-1 Komaba, Meguro, Tokyo, 153-8914 Japan.}
\email{gongyo@ms.u-tokyo.ac.jp}
%%%%%%%%%%%%%%%%%%%%%%%%%%%
% Local abbreviations
\newcommand{\Bs}[0]{{\operatorname{Bs}}}
\newcommand{\Supp}[0]{{\operatorname{Supp}}}
%%%%%%%%%%%%%%%%%%%%%%%%%%%%%%
% Definitions for new environments
\newtheorem{thm}{Theorem}[section]
\newtheorem{lem}[thm]{Lemma}
\newtheorem{cor}[thm]{Corollary}

\theoremstyle{definition}

\newtheorem{rem}[thm]{Remark}
\newtheorem*{ack}{Acknowledgments}       
\newtheorem*{notation}{Notation}         
\newtheorem{say}[thm]{}
%%%%%%%%%%%%%%%%%%%%%%%%%%%
\begin{document}
\bibliographystyle{amsalpha+}

\maketitle 

\begin{abstract} 
We consider a canonical bundle formula for 
generically finite proper surjective morphisms and 
obtain subadjunction formulae for minimal log canonical 
centers of log canonical pairs. 
We also treat related topics and applications. 
\end{abstract}

\tableofcontents

\section{Introduction}

The following lemma is one of the main results of this paper, 
which is missing in the literature. It is a canonical 
bundle formula for generically finite proper surjective morphisms. 

\begin{lem}
[Main Lemma]
\label{gongyo} 
Let $X$ and $Y$ be normal varieties and 
let $f:X\to Y$ be a generically finite proper surjective morphism. 
Let $\mathbb K$ be the rational number field $\mathbb Q$ or the real number field $\mathbb R$. 
Suppose that there exists an effective $\mathbb K$-divisor 
$\Delta$ on $X$  such that 
$(X, \Delta)$ is log canonical 
and that 
$K_X+\Delta \sim _{\mathbb K, f}0$. 
Then there exists an effective 
$\mathbb K$-divisor 
$\Gamma$ on $Y$ such that 
$(Y, \Gamma)$ is log canonical and 
that 
$$K_X+\Delta\sim _{\mathbb K}f^*(K_Y+\Gamma). 
$$
Moreover, if $(X, \Delta)$ is kawamata log terminal, 
then 
we can choose $\Gamma$ such that 
$(Y, \Gamma)$ is kawamata log terminal.  
\end{lem}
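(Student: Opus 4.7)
Using Stein factorization, write $f = p \circ \rho$ with $\rho\colon X \to X'$ birational (it has connected fibers and is birational since $f$ is generically finite) and $p\colon X' \to Y$ finite, $X'$ normal. The relation $K_X+\Delta \sim_{\mathbb K, f} 0$ unpacks as $K_X+\Delta \sim_{\mathbb K} f^{*}M$ for some $\mathbb K$-Cartier divisor $M$ on $Y$, hence also $K_X+\Delta \sim_{\mathbb K, \rho} 0$, so I may treat the two factors in turn.

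\emph{Birational step.} Set $\Delta' := \rho_{*}\Delta$. Since $\rho_{*}K_X = K_{X'}$ and $\rho_{*}\rho^{*} = \mathrm{id}$, pushing forward yields $K_{X'}+\Delta' \sim_{\mathbb K} p^{*}M$, so $K_{X'}+\Delta'$ is $\mathbb K$-Cartier. The difference $E := (K_X+\Delta) - \rho^{*}(K_{X'}+\Delta')$ is $\rho$-exceptional and $\rho$-linearly trivial, so the negativity lemma forces $E = 0$. Thus $K_X+\Delta = \rho^{*}(K_{X'}+\Delta')$, which transfers log discrepancies verbatim, making $(X',\Delta')$ lc (resp.\ klt), with $K_{X'}+\Delta' \sim_{\mathbb K, p} 0$, and reduces everything to the finite case.

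\emph{Finite step.} Using the Hurwitz identity $K_X = f^{*}K_Y + R_f$ on normal varieties, set
\[
\Gamma := \frac{1}{\deg f}\, f_{*}(\Delta + R_f),
\]
which is effective, has $\operatorname{coeff}_F(\Gamma) \in [0,1]$ at every prime $F \subset Y$ by $\sum_{P \to F} e_{P}f_{P} = \deg f$, and satisfies $K_Y+\Gamma \sim_{\mathbb K} M$, so $K_X+\Delta \sim_{\mathbb K} f^{*}(K_Y+\Gamma)$. To check $(Y,\Gamma)$ is lc (resp.\ klt) at every divisorial valuation, take the Galois closure $g\colon Z \to Y$ factoring as $Z \xrightarrow{h} X \xrightarrow{f} Y$ with group $G$, define $\Delta_Z$ by $K_Z+\Delta_Z := h^{*}(K_X+\Delta)$ (so $(Z,\Delta_Z)$ is sub-lc), average over $G$ to obtain a $G$-invariant sub-lc pair $(Z,\overline\Delta_Z)$, and descend via $R_g = g^{*}\bigl(\sum_F(1-1/e_F)F\bigr)$ to compare with $(Y,\Gamma)$; finite pullback preserves sub-lc, which transfers to lc-ness on $Y$.

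\emph{Main obstacle.} The crux is verifying $(Y,\Gamma)$ is lc at every divisorial valuation, not merely at the prime divisors of $Y$. One cannot pull back via $f$ and invoke lc-ness of $(X,\Delta)$ directly, because $f^{*}(K_Y+\Gamma)$ equals $K_X+\Delta$ only up to $\mathbb K$-linear equivalence, not as an equality of divisors. The Galois-closure/averaging trick upgrades linear equivalence to a genuine pullback equality on $Z$, but requires delicate bookkeeping: $\Delta_Z$ may have negative coefficients along ramification of $h$ over the unbranched part of $\Delta$, and one must show the effective $\Gamma$ given by the pushforward formula coincides (as a divisor) with the one produced by averaging and descent. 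The klt case is handled identically, replacing $[0,1]$ by $[0,1)$ and sub-lc by sub-klt throughout.
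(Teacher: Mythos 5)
Your plan is correct, and it diverges from the paper's proof exactly at the one point where the real work happens. Both arguments begin the same way: Stein factorization, the negativity lemma to descend $K_X+\Delta$ crepantly to the finite part, the same candidate $\Gamma=\frac{1}{\deg f}f_*(\Delta+R_f)$, and the same coefficient bound $\sum_{P\to F}(e_P+\delta_P-1)f_P\le \deg f$. You have also correctly isolated the crux: coefficients in $[0,1]$ do not give log canonicity, and one cannot test $(Y,\Gamma)$ by pulling back along $f$ itself, since $f^*(K_Y+\Gamma)-K_X$ need not equal $\Delta$. The paper handles this by constructing a compatible diagram $X'\to Y'\to Y$ in which $\mu\colon Y'\to Y$ is a resolution, isomorphic over an open set where $f$ is \'etale and $\Delta$ is absent; then $\mu^*(K_Y+\Gamma)=K_{Y'}+\frac{1}{\deg f'}f'_*(R+\Omega)$ has simple normal crossing support with coefficients $\le 1$, which tests all divisorial valuations at once. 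Your Galois-closure route instead uses the crepant correspondence for finite morphisms (\cite[Proposition 5.20]{kollar-mori}) together with convexity of sub-lc-ness, and it does close: fixing $K_Z=\operatorname{div}(g^*\omega)$ so that it is $G$-invariant, each $(Z,\sigma^*\Delta_Z)$ is sub-lc (crepant pullback of $(Z,\Delta_Z)$ along the automorphism $\sigma$), hence so is the average $(Z,\overline\Delta_Z)$; the divisor $K_Z+\overline\Delta_Z$ is $G$-invariant and therefore equals $g^*(K_Y+\Gamma')$ with $\Gamma'=\frac{1}{\deg g}\,g_*(\overline\Delta_Z+R_g)$, and the ``delicate bookkeeping'' you defer is in fact a two-line computation: $g_*\sigma^*=g_*$ gives $g_*\overline\Delta_Z=g_*\Delta_Z=\deg h\cdot f_*\Delta-g_*R_h$, and $R_g=R_h+h^*R_f$ then gives $g_*(\overline\Delta_Z+R_g)=\deg h\cdot f_*(\Delta+R_f)$, i.e.\ $\Gamma'=\Gamma$, so no separate comparison of the two candidates is needed. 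The trade-off is clear: the paper's argument requires building a resolution adapted to $f$ and $\Delta$ but then reads off lc-ness (and the klt refinement) directly from SNC coefficients; yours avoids constructing any resolution at the cost of passing to the Galois closure and invoking finite crepant descent. Both treat the klt case by the same strict-inequality observation.
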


As an application of Lemma \ref{gongyo},  
we prove a subadjunction formula for minimal lc centers. 
It is a generalization of Kawamata's subadjunction formula (cf.~\cite[Theorem 1]{kawamata}). 
For a local version, see Theorem \ref{72} below. 

\begin{thm}[Subadjunction formula for minimal lc centers]\label{main-in} 
Let $\mathbb K$ be the rational number field $\mathbb Q$ or the 
real number field $\mathbb R$. 
Let $X$ be a normal projective variety and let $D$ be 
an effective $\mathbb K$-divisor on $X$  such that 
$(X, D)$ is log canonical. 
Let $W$ be a minimal log canonical center with respect to 
$(X, D)$. 
Then there exists an effective $\mathbb K$-divisor $D_W$ on $W$ such that 
$$
(K_X+D)|_W\sim _{\mathbb K} K_W+D_W
$$
and that the pair $(W, D_W)$ is kawamata log terminal. 
In particular, $W$ has only rational singularities. 
\end{thm}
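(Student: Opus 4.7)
The plan is to deduce Theorem \ref{main-in} from the Main Lemma by constructing a klt pair $(S, \Delta_S)$ together with a generically finite proper surjective morphism $g : S \to W$ such that $K_S+\Delta_S$ is, up to $\mathbb K$-linear equivalence, the pull-back of $(K_X+D)|_W$. Once such data are in hand, Lemma \ref{gongyo} immediately produces a candidate for $D_W$, and the theorem follows by a pull-back chase.

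First I would take a $\mathbb Q$-factorial dlt modification $f : (Y, \Delta_Y) \to (X, D)$: thus $(Y, \Delta_Y)$ is dlt, $K_Y + \Delta_Y = f^*(K_X+D)$, and every $f$-exceptional divisor is a component of $\lfloor \Delta_Y \rfloor$. The lc centers of $(Y, \Delta_Y)$, i.e.\ the strata of $\lfloor \Delta_Y \rfloor$, are mapped by $f$ to lc centers of $(X,D)$. Choose a stratum $S$ of minimal dimension among those mapping onto $W$; this is possible because $W$ is itself an lc center. Any proper sub-stratum of $S$ would have image an lc center of $(X,D)$ contained in $W$, and the minimality of $W$ would then force this image to equal $W$, contradicting the minimality of $\dim S$. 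Hence $S$ has no proper sub-strata, and iterated dlt adjunction along the components of $\lfloor \Delta_Y \rfloor$ cutting out $S$ yields a \emph{kawamata log terminal} pair $(S, \Delta_S)$ with $S$ normal, $\Delta_S \ge 0$, and
$$
K_S + \Delta_S = (K_Y + \Delta_Y)|_S = f^*(K_X+D)|_S.
$$

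The main remaining point, and the principal obstacle, is to ensure that $g := f|_S : S \to W$ is generically finite. The minimality of $W$ as an lc center is again the decisive input: it is known (via Koll\'ar's theory of sources and springs of lc centers, or equivalently through the Kawamata--Ambro analysis that underlies the normality of minimal lc centers) that a dlt modification can be arranged so as to contain a stratum birational onto any prescribed minimal lc center. After replacing the dlt modification by such a model if necessary, the stratum $S$ constructed above maps birationally, and in particular generically finitely, onto $W$, and
$$
K_S + \Delta_S = g^*\bigl((K_X+D)|_W\bigr) \sim_{\mathbb K, g} 0.
$$

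Now the hypotheses of Lemma \ref{gongyo} are satisfied with $(S, \Delta_S)$ klt and $g$ generically finite proper surjective, so the Main Lemma yields an effective $\mathbb K$-divisor $D_W$ on $W$ with $(W, D_W)$ klt and $K_S + \Delta_S \sim_{\mathbb K} g^*(K_W + D_W)$. Both $g^*\bigl((K_X+D)|_W\bigr)$ and $g^*(K_W + D_W)$ coincide with $K_S+\Delta_S$ up to $\mathbb K$-linear equivalence, and $g^*$ is injective on $\mathbb K$-linear equivalence classes (because $g_*g^* = \deg(g)\cdot \mathrm{id}$ for a generically finite surjection), so $(K_X+D)|_W \sim_{\mathbb K} K_W + D_W$. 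The final assertion that $W$ has only rational singularities is then the standard consequence of $(W,D_W)$ being klt (Elkik, Koll\'ar--Mori). The crux of the whole argument is the simultaneous achievement of kltness on $S$ and generic finiteness of $g$: the kltness is forced directly by the minimality of $W$, whereas the generic finiteness requires the more substantial input about dlt modifications of minimal lc centers.
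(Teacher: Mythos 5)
There is a genuine gap, and it sits exactly where you locate ``the principal obstacle.'' Your claim that a dlt modification can be arranged so that some stratum maps birationally (or even generically finitely) onto a prescribed minimal lc center $W$ is false in general. Take $X$ to be the cone over an elliptic curve $E$ with $D=0$: the pair $(X,0)$ is log canonical, its unique (hence minimal) lc center $W$ is the vertex, and on the dlt blow-up the minimal stratum is the exceptional divisor $S\cong E$, which maps to the point $W$ with one-dimensional fibers. No choice of dlt model removes this: in Koll\'ar's theory of sources and springs, the source of $W$ is well defined up to crepant birational equivalence and fibers over the \emph{spring} $W^s$, which is finite over $W$; the source itself is birational to $W$ only in special situations. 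So the morphism $g=f|_S:S\to W$ cannot in general be fed into Lemma \ref{gongyo}, whose hypothesis of generic finiteness is essential (its proof reduces immediately to a finite cover and uses the ramification formula).

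The paper's proof accepts that $S\to W$ has positive-dimensional fibers and splits it by Stein factorization $S\to V\to W$. The connected-fiber part $S\to V$ is handled by Ambro's canonical bundle formula for projective klt pairs (Theorem \ref{ambro-formula}), which is the deep ingredient your argument is missing and cannot be replaced by a birationality statement about dlt models; only the finite part $V\to W$ is handled by the Main Lemma. Your construction of the klt stratum $(S,\Delta_S)$ via minimality of $W$ and dlt adjunction agrees with the paper, and your use of $g_*g^*=\deg(g)\cdot\mathrm{id}$ at the end is fine, but the middle step needs Theorem \ref{ambro-formula} rather than the nonexistent ``birational stratum'' input.
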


We summarize the contents of this paper. 
Section \ref{sec2} is devoted to the proof of Lemma \ref{gongyo}. 
In Section \ref{sec6}, we discuss Ambro's canonical 
bundle formula for projective kawamata log terminal pairs with a generalization for 
$\mathbb R$-divisors (cf.~Theorem \ref{ambro-formula}). 
It is one of the key ingredients of 
the proof of Theorem \ref{main-in}. 
Although Theorem \ref{ambro-formula} is sufficient for 
applications in subsequent sections, we treat 
slight generalizations of Ambro's canonical bundle 
formula for 
projective log canonical pairs.  
In Section \ref{sec3}, we prove a 
subadjunction formula for 
minimal log canonical centers (cf.~Theorem \ref{main-in}), which is 
a generalization of Kawamata's subadjunction formula (cf.~\cite[Theorem 1]{kawamata}). 
In Section \ref{sec4}, we treat images of log Fano varieties by 
generically finite surjective morphisms as an application of Lemma \ref{gongyo}. 
Theorem \ref{41} is an answer to the question raised by Professor 
Karl Schwede (cf.~\cite[Remark 6.5]{ss}). 
In Section \ref{sec5}, 
we give a quick proof of the non-vanishing theorem for log canonical 
pairs as an application of Theorem \ref{main-in}, which is the main theorem of the first 
author's paper:~\cite{fujino2}. 
In Section \ref{sec7}, we prove a local version of our 
subadjunction formula for minimal log canonical centers (cf.~Theorem \ref{72}). 
It is useful for local studies of singularities of 
pairs. This local version does not directly follow from the global version:~Theorem \ref{main-in}. 
It is because we do not know how to compactify 
log canonical pairs. 

We close this introduction with the following notation. 
We also use the standard notation in \cite{kollar-mori}. 

\begin{notation} 
Let $\mathbb K$ be the real number field $\mathbb R$ or the rational 
number field $\mathbb Q$. 

Let $X$ be a normal variety and let $B$ be an effective $\mathbb K$-divisor 
such that $K_X+B$ is $\mathbb K$-Cartier. 
Then we can define the {\em{discrepancy}} $a(E, X, B)\in \mathbb K$ for every prime 
divisor $E$ {\em{over}} $X$. 
If $a(E, X, B) \geq -1$ (resp.~$>-1$) 
for every $E$, then $(X, B)$ is called {\em{log canonical}} 
(resp.~{\em{kawamata log terminal}}). 
We sometimes abbreviate log 
canonical (resp.~kawamata log terminal) to {\em{lc}} (resp.~{\em{klt}}). 

Assume that $(X, B)$ is log canonical. 
If $E$ is a prime divisor over $X$ 
such that $a(E, X, B)=-1$, then $c_X (E)$ is called a 
{\em{log canonical center}} ({\em{lc center}}, for short) of $(X, B)$, where $c_X(E)$ 
is the closure of the image of $E$ on $X$. 
For the basic properties of log canonical centers, 
see \cite[Theorem 2.4]{fujino2} or \cite[Section 9]{fujino3}. 

We note that $\sim _{\mathbb K}$ denotes {\em{$\mathbb K$-linear 
equivalence}} of $\mathbb K$-divisors. 
Let $f:X\to Y$ be a morphism between normal varieties and 
let $D$ be a $\mathbb K$-Cartier $\mathbb K$-divisor on $X$. 
Then $D$ is $\mathbb K$-linearly $f$-trivial, denoted by 
$D\sim _{\mathbb K, f}0$, if and only if there is 
a $\mathbb K$-Cartier $\mathbb K$-divisor $B$ on $Y$ such that 
$D\sim _{\mathbb K}f^*B$. 
 
The base locus of the linear system $\Lambda$ is denoted by 
$\Bs \Lambda$. 
\end{notation} 

\begin{ack}
The first author was partially supported by The Inamori 
Foundation and by the Grant-in-Aid for Young 
Scientists (A) $\sharp$20684001 from JSPS. 
He thanks Professor Karl Schwede for comments and questions.  
The second author was partially supported by the Research Fellowships of 
the Japan Society for the Promotion of Science for Young Scientists. 
\end{ack}

We will work over $\mathbb C$, the complex number field, throughout this paper. 

\section{Main lemma}\label{sec2}

In this section, we prove Lemma \ref{gongyo}. 

\begin{proof}[Proof of {\em{Lemma \ref{gongyo}}}]
Let 
$$
\begin{CD}
f:X@>{g}>>Z@>{h}>>Y 
\end{CD} 
$$ 
be the Stein factorization. 
By replacing $(X, \Delta)$ with 
$(Z, g_*\Delta)$, we can 
assume that $f:X\to Y$ is finite. 
Let $D$ be a $\mathbb{K}$-Cartier $\mathbb K$-divisor on 
$Y$ such that $K_X+\Delta \sim_{\mathbb{K}} f^*D$. 
We consider the following commutative diagram: 
$$
\xymatrix{
   X' \ar[r]^{\nu} \ar[d]_{f'} & X \ar[d]^{f} \\
   Y' \ar[r]_{\mu} & Y,
} 
$$
where 
\begin{itemize}
\item[(i)] $\mu$ is a resolution of singularities of $Y$,
\item[(ii)] there exists an open set $U \subseteq Y$ 
such that $\mu$ is isomorphic over $U$ and $f$ is \'etale over $U$. Moreover, 
$\mu^{-1}(Y-U)$ has a simple normal crossing 
support and $Y-U$ contains $\Supp f_*\Delta$, and 
\item[(iii)]$X'$ is the normalization of the irreducible 
component of $X \times_{Y} Y'$ which dominates 
$Y'$. 
In particular, $f'$ is finite.
\end{itemize}
Let $\Omega=\sum_i \delta_iD_i$ be a $\mathbb{K}$-divisor on $X'$ such that 
$$K_{X'}+\Omega=\nu^*(K_X+\Delta).$$ 
We consider the ramification formula:

$$K_{X'}=f'^*K_{Y'} + R,
$$
where $R=\sum_i (r_i-1)D_i$ is an effective $\mathbb{Z}$-divisor such that $r_i$ is the 
ramification index of $D_i$ for every $i$. 
Note that it suffices to show the above formula outside 
codimension two closed subsets of $X'$. 
Then it holds that 
$$(\mu \circ f')^*D \sim_{\mathbb{K}} f'^*K_{Y'} + R+\Omega.
$$
By pushing forward the above formula by $f'$, we see
$$\mathrm{deg}f' \cdot \mu^*D 
\sim_{\mathbb{K}} \mathrm{deg}f' \cdot K_{Y'} +f'_*(R+\Omega).
$$
We set 
$$\Gamma:=\frac{1}{\mathrm{deg}f'}\mu_*f'_*(R+\Omega)
$$
on $Y$.
Then  $\Gamma$ is effective since
$$\mu_*f'_*(R+\Omega)=f_*\nu_*(R+\Omega)=f_*(\nu_*R+ \Delta).
$$
Let $Y'\setminus \mu^{-1}U = \bigcup_{j}E_j$ be 
the irreducible decomposition, where $\sum_{j}E_j$ 
is a simple normal crossing divisor. We set 
$$I_j:=\{i| f'(D_i)=E_j \}.$$ 
The coefficient of $E_j$ in $f'_*(R+\Omega)$ is 
$$\frac{\sum_{i \in I_j}(r_i+\delta_i-1)\mathrm{deg}(f'|_{D_i})}{\mathrm{deg}f'}.
$$
Since $\delta_i\leq 1$, it holds that
$$\sum_{i \in I_j}(r_i+\delta_i-1)\mathrm{deg}(f'|_{D_i}) 
\leq \sum_{i \in I_j}r_i\mathrm{deg}(f'|_{D_i})=\mathrm{deg}f'.
$$
Thus $(Y,\Gamma)$ is log canonical since 
$K_{Y'}+f'_*(R+\Omega)=\mu^*(K_Y+\Gamma)$. 
Moreover, if $(X,\Delta)$ is kawamata log terminal, 
then $\delta_i < 1$. 
Hence $(Y,\Gamma)$ is kawamata log terminal.
\end{proof}

\section{Ambro's canonical bundle formula}\label{sec6}  
Theorem \ref{ambro-formula} is Ambro's canonical bundle formula for 
projective klt pairs (cf.~\cite[Theorem 4.1]{ambro}) 
with a generalization for $\mathbb R$-divisors. 
We need it for the proof of our subadjunction 
formula:~Theorem \ref{main-in}. 

\begin{thm}[Ambro's canonical bundle formula for 
projective klt pairs]\label{ambro-formula}Let $\mathbb K$ be 
the rational number field $\mathbb Q$ or 
the real number field $\mathbb R$. 
Let $(X, B)$ be a projective kawamata log terminal 
pair and let $f:X\to Y$ be a projective surjective 
morphism onto a normal projective variety 
$Y$ with connected fibers. 
Assume that 
$$
K_X+B\sim _{\mathbb K, f}0. 
$$
Then there exists an effective $\mathbb K$-divisor 
$B_Y$ on $Y$ such that 
$(Y, B_Y)$ is klt and 
$$K_X+B~\sim _{\mathbb K}f^*(K_Y+B_Y).$$  
\end{thm}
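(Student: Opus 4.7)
The plan is to reduce the $\mathbb{R}$-coefficient statement to Ambro's original $\mathbb{Q}$-coefficient canonical bundle formula \cite[Theorem 4.1]{ambro} via rational perturbation and convex combination; the case $\mathbb{K} = \mathbb{Q}$ is precisely that result. Concretely, I will express $B$ as a finite convex combination $B = \sum_i r_i B_i$ of effective $\mathbb{Q}$-divisors $B_i$ such that each $(X, B_i)$ is klt and $K_X + B_i \sim_{\mathbb{Q}, f} 0$, apply the $\mathbb{Q}$-case to each $(X, B_i) \to Y$ to obtain $B_{Y,i}$, and then set $B_Y := \sum_i r_i B_{Y,i}$.

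\textbf{Step 1 (Rational perturbation).} Fix a canonical divisor $K_X$. Since $K_X + B \sim_{\mathbb{R}, f} 0$, there exist an $\mathbb{R}$-Cartier $\mathbb{R}$-divisor $D$ on $Y$, rational functions $\varphi_1, \ldots, \varphi_m$ on $X$, and real numbers $s_1, \ldots, s_m$ satisfying
$$
K_X + B - f^{*}D = \sum_{j=1}^{m} s_j \operatorname{div}(\varphi_j).
$$
Let $\mathcal{A}$ denote the real affine space of tuples $(\widetilde{B}, \widetilde{D}, \widetilde{s}_1, \ldots, \widetilde{s}_m)$ in which $\widetilde{B}$ is an $\mathbb{R}$-divisor supported in $\operatorname{Supp} B$, $\widetilde{D}$ is an $\mathbb{R}$-Cartier $\mathbb{R}$-divisor supported in $\operatorname{Supp} D$, and the above identity holds with $\widetilde{B}, \widetilde{D}, \widetilde{s}_j$ in place of $B, D, s_j$. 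The defining relations are $\mathbb{Q}$-linear in the unknown coefficients, so $\mathbb{Q}$-rational points are dense in $\mathcal{A}$. The effectivity and klt conditions on $\widetilde{B}$ define an open subset of $\mathcal{A}$ that contains $(B, D, s_1, \ldots, s_m)$. Hence I can pick finitely many $\mathbb{Q}$-rational points $(B_i, D_i, s_{i,1}, \ldots, s_{i,m})$ of $\mathcal{A}$ inside this open subset whose convex hull contains $B$ as a relative interior point, producing $B = \sum_i r_i B_i$ with $r_i > 0$ and $\sum_i r_i = 1$. By construction each $(X, B_i)$ is klt and $K_X + B_i \sim_{\mathbb{Q}} f^{*}D_i$.

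\textbf{Steps 2--3 (Apply Ambro and average).} For each $i$, Ambro's $\mathbb{Q}$-coefficient formula applied to $(X, B_i) \to Y$ yields an effective $\mathbb{Q}$-divisor $B_{Y,i}$ on $Y$ such that $(Y, B_{Y,i})$ is klt and
$$
K_X + B_i \sim_{\mathbb{Q}} f^{*}(K_Y + B_{Y,i}).
$$
Set $B_Y := \sum_i r_i B_{Y,i}$, an effective $\mathbb{R}$-divisor. Because discrepancies are linear in the boundary, $a(E, Y, B_Y) = \sum_i r_i\, a(E, Y, B_{Y,i}) > -1$ for every prime divisor $E$ over $Y$, so $(Y, B_Y)$ is klt. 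Taking the $r_i$-weighted sum of the above equivalences gives
$$
K_X + B = \sum_i r_i (K_X + B_i) \sim_{\mathbb{R}} f^{*}(K_Y + B_Y),
$$
which is the desired formula.

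\textbf{Main obstacle.} The only delicate point is the rational perturbation: producing $B_i$ that remain simultaneously effective and klt while preserving $f$-linear triviality of $K_X + B_i$ over $\mathbb{Q}$. The essential observation is that, once the relevant supports are fixed, $\mathbb{R}$-linear $f$-triviality is cut out by $\mathbb{Q}$-linear equations on the coefficients, so that $\mathbb{Q}$-rational data are dense in $\mathcal{A}$ and a convex decomposition entirely inside the klt locus is available. Given that, everything else is formal: an appeal to the known $\mathbb{Q}$-case on each summand and the linearity of discrepancies and pullback.
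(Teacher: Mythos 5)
Your proposal is correct, but it reaches the rational decomposition $K_X+B=\sum_i r_i(K_X+B_i)$ by a genuinely different mechanism than the paper. The paper works inside Shokurov's polytope: it decomposes $B$ as a convex combination of rational klt boundaries $\Delta_i$ with $K_X+\Delta_i$ merely $f$-nef, observes that each $K_X+\Delta_i$ is then numerically $f$-trivial, and must invoke Nakayama's result $\kappa(X_\eta,(K_X+\Delta_i)_\eta)=\nu(X_\eta,(K_X+\Delta_i)_\eta)=0$ together with the theorem of \cite{fujino-kawamata} to upgrade numerical $f$-triviality to $K_X+\Delta_i\sim_{\mathbb Q,f}0$ before Ambro's $\mathbb Q$-statement can be applied. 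You instead perturb inside the rational affine space of solutions of the explicit identity $K_X+\widetilde B-f^*\widetilde D=\sum_j\widetilde s_j\operatorname{div}(\varphi_j)$, so that every $\mathbb Q$-rational point automatically satisfies $K_X+B_i\sim_{\mathbb Q}f^*D_i$; this bypasses Nakayama and Fujino--Kawamata entirely and is more elementary. The price is the (standard, but worth stating) verification that $\mathcal A$ really is cut out by $\mathbb Q$-linear conditions: since $Y$ is not assumed $\mathbb Q$-factorial, you should note that the $\mathbb R$-Cartier $\mathbb R$-divisors supported on $\operatorname{Supp}D$ form a rational linear subspace on which $f^*$ is a $\mathbb Q$-linear map, and that effectivity is open near $B$ only because every component of $B$ has strictly positive coefficient. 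Granting these points, your Steps 2--3 (apply the $\mathbb Q$-case to each summand, average, and use affinity of discrepancies in the boundary with $\sum_i r_i=1$) coincide with the paper's conclusion of the argument.
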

\begin{proof} 
If $\mathbb K=\mathbb Q$, then 
the statement is nothing but \cite[Theorem 4.1]{ambro}. 
From now on, we assume that 
$\mathbb K=\mathbb R$. 
Let $\sum _i B_i$ be the irreducible decomposition of $\Supp B$. 
We put $V=\underset{i}{\bigoplus}  \mathbb RB_i$. 
Then it is well known that 
$$
\mathcal L=\{ \Delta \in V\, | \, (X, \Delta)\ \text{is log canonical}\}
$$ 
is a rational polytope in $V$. 
We can also check that 
$$
\mathcal N=\{ \Delta \in \mathcal L\, |\, K_X+\Delta\ \text{is $f$-nef}\}
$$ 
is a rational polytope and $B\in \mathcal N$. 
We note that $\mathcal N$ is known as Shokurov's polytope. 
Therefore, we can write 
$$
K_X+B=\sum _{i=1}^k r_i (K_X+\Delta_i)
$$ 
such that 
\begin{itemize}
\item[(i)] $\Delta_i\in \mathcal N$ is an effective $\mathbb Q$-divisor on $X$ for 
every $i$, 
\item[(ii)] $(X, \Delta_i)$ is klt for every 
$i$, and 
\item[(iii)] $0<r_i<1$, $r_i\in \mathbb R$ for every $i$, and 
$\sum _{i=1}^k r_i =1$. 
\end{itemize}
Since $K_X+B$ is numerically $f$-trivial 
and $K_X+\Delta_i$ is $f$-nef for every $i$, 
$K_X+\Delta_i$ is numerically $f$-trivial for every $i$. 
Thus, $$\kappa (X_\eta, (K_{X}+\Delta_{i })_\eta)=\nu (X_\eta, (K_{X}+\Delta_{i})_\eta)=0$$ 
for every $i$, where $\eta$ is the generic point of $Y$, 
by Nakayama (cf.~\cite[Chapter V 
2.9.~Corollary]{nakayama}). See also \cite[Theorem 4.2]{ambro}.  
Therefore, $K_X+\Delta_i \sim _{\mathbb Q, f}0$ for every 
$i$ 
by \cite[Theorem 1.1]{fujino-kawamata}. 
By the case when $\mathbb K=\mathbb Q$, 
we can find an effective $\mathbb Q$-divisor 
$\Theta _i$ on $Y$ such that 
$(Y, \Theta _i)$ is klt and 
$$
K_X+\Delta_i\sim _{\mathbb Q}f^*(K_Y+\Theta_i) 
$$ 
for every $i$. 
By putting $B_Y=\sum _{i=1}^k r_i \Theta _i$, 
we obtain 
$$K_X+B\sim _{\mathbb R}f^*(K_Y+B_Y), $$ and 
$(Y, B_Y)$ is klt. 
\end{proof}

Corollary \ref{43} 
is a direct consequence of Theorem \ref{ambro-formula}. 

\begin{cor}\label{43} 
Let $\mathbb K$ be the rational number field $\mathbb Q$ or the 
real number field 
$\mathbb R$. 
Let $(X, B)$ be a log canonical 
pair and 
let $f:X\to Y$ be a projective 
surjective morphism between normal 
projective varieties. 
Assume that $$K_X+B\sim _{\mathbb K, f}0$$ 
and that every lc center of $(X, B)$ is dominant onto 
$Y$. 
Then we can find an effective 
$\mathbb K$-divisor 
$B_Y$ on $Y$ such that 
$(Y, B_Y)$ is 
kawamata log terminal 
and that 
$$K_X+B\sim _{\mathbb K} f^*(K_Y+B_Y).$$
\end{cor}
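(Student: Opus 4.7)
The plan is to reduce to Theorem \ref{ambro-formula} in two stages: first from $\mathbb K=\mathbb R$ to $\mathbb K=\mathbb Q$ via the Shokurov-polytope argument of Theorem \ref{ambro-formula}, and then from the lc hypothesis to the klt one by a horizontality-based perturbation on a dlt modification.

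For the polytope reduction I would work in the $\mathbb R$-vector space $V$ spanned by the components of $\operatorname{Supp}B$ and with the rational polytope
$$
\mathcal N=\bigl\{\Delta\in V \bigm| \Delta\ge 0,\ (X,\Delta)\ \text{is log canonical},\ K_X+\Delta\ \text{is $f$-nef}\bigr\}.
$$
Exactly as in Theorem \ref{ambro-formula}, I would write $B=\sum_{i=1}^{k}r_i\Delta_i$ as a positive convex combination of rational points $\Delta_i\in\mathcal N$ sufficiently close to $B$. The convex-combination inequality forces the coefficient of any lc component of $B$ in $\Delta_i$ to equal $1$, and continuity of discrepancies guarantees that for $\Delta_i$ close enough to $B$ the lc places of $(X,\Delta_i)$ form a subset of those of $(X,B)$; hence every lc center of $(X,\Delta_i)$ still dominates $Y$. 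The Nakayama--Fujino--Kawamata input used in the proof of Theorem \ref{ambro-formula} then upgrades $K_X+\Delta_i$ from numerically $f$-trivial to $\mathbb Q$-linearly $f$-trivial.

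This reduces the problem to the case $\mathbb K=\mathbb Q$ applied to each $(X,\Delta_i)$. For this I would take a $\mathbb Q$-factorial dlt modification $\pi\colon X'\to X$ with $K_{X'}+\Delta'_i=\pi^*(K_X+\Delta_i)$, so that $(X',\Delta'_i)$ is dlt, $\Delta'_i$ is effective, and every component of $\lfloor\Delta'_i\rfloor$ is $(f\circ\pi)$-horizontal. A relative Bertini-type argument with a sufficiently ample divisor on $Y$ should then produce an effective $\mathbb Q$-divisor $F\sim_{\mathbb Q,f\circ\pi}\lfloor\Delta'_i\rfloor$ whose support avoids the non-klt locus of $(X',\Delta'_i-\lfloor\Delta'_i\rfloor)$. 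For $\varepsilon>0$ sufficiently small, $(X',\Delta'_i-\varepsilon\lfloor\Delta'_i\rfloor+\varepsilon F)$ is then klt and its log canonical class is still $\mathbb Q$-linearly $(f\circ\pi)$-trivial, so Theorem \ref{ambro-formula} applied to it furnishes a klt $\Theta_i$ on $Y$ with $K_X+\Delta_i\sim_{\mathbb Q}f^*(K_Y+\Theta_i)$. Setting $B_Y:=\sum_{i=1}^k r_i\Theta_i$, which is klt as a positive convex combination of klt boundaries, gives the required relation $K_X+B\sim_{\mathbb K}f^*(K_Y+B_Y)$.

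The main obstacle is the construction of the auxiliary divisor $F$ in the $\mathbb Q$-case: producing a $\mathbb Q$-linearly $f$-equivalent effective divisor in sufficiently general position uses the hypothesis that every lc center dominates $Y$ in an essential way, since any vertical lc center would obstruct the existence of such an $F$. The polytope argument alone cannot bridge lc and klt — convex combinations of lc boundaries cannot give coefficient strictly less than $1$ along an lc place of $B$ — so the horizontality of lc centers is doing real work at this step.
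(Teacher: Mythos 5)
You have correctly identified that the horizontality of the lc centers must do real work, but the mechanism you propose for exploiting it --- perturbing $\lfloor\Delta'_i\rfloor$ into general position after a dlt modification --- breaks down, and in fact fails in examples. The required divisor $F\sim_{\mathbb Q}\lfloor\Delta'_i\rfloor+(f\circ\pi)^*G$ need not exist: a divisor pulled back from $Y$ contributes no positivity along the fibers, so a relative Bertini argument gives no control on base loci, and a horizontal divisor can be rigid over $Y$. Concretely, let $S$ be $\mathbb P^2$ blown up at nine very general points on a smooth cubic, let $C$ be the strict transform of the cubic, and set $X=S\times\mathbb P^1$, $B=C\times\mathbb P^1$, with $f$ the second projection. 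Then $(X,B)$ is log smooth (hence dlt, so no modification is needed), $K_X+B\sim f^*K_{\mathbb P^1}$, and the unique lc center $C\times\mathbb P^1$ dominates $\mathbb P^1$, so all hypotheses of the corollary hold. But $h^0(S,\mathcal O_S(mC))=1$ for every $m\geq 0$ (the normal bundle of $C$ is a non-torsion degree-zero line bundle), so by K\"unneth every effective $\mathbb Q$-divisor that is $\mathbb Q$-linearly equivalent to $B+f^*G$ contains $C\times\mathbb P^1$ with multiplicity at least $1$, for every choice of $G$. No admissible $F$ exists, and the klt pair you want to feed into Theorem \ref{ambro-formula} cannot be produced this way.

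The paper's proof avoids any perturbation: after a dlt blow-up it replaces $(X,B)$ by a minimal lc center $Z$, which is klt by dlt adjunction and dominates $Y$ precisely because of the horizontality hypothesis; it then takes the Stein factorization $Z\to V\to Y$, applies Theorem \ref{ambro-formula} to the connected-fiber part $Z\to V$, and descends along the finite part $V\to Y$ by Lemma \ref{gongyo}. (In the example above one simply restricts to $C\times\mathbb P^1$.) Two further problems with your route: in your polytope step the pairs $(X,\Delta_i)$ are only lc, not klt, over the generic point of $Y$, so the Nakayama and Fujino--Kawamata inputs quoted in the proof of Theorem \ref{ambro-formula} are not available as stated --- you would need numerically trivial log abundance for lc pairs, a substantially deeper result; and Theorem \ref{ambro-formula} assumes connected fibers while the corollary does not, so a Stein factorization together with Lemma \ref{gongyo} for the finite part is unavoidable in any case, yet your argument never performs it.
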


\begin{proof}
By taking a dlt blow-up (cf.~\cite[Theorem 10.4]{fujino3}), 
we can assume that $(X, B)$ is dlt. 
By replacing $(X, B)$ with its minimal 
lc center and taking the Stein 
factorization, we can 
assume that $(X, B)$ is klt and that $f$ has connected fibers 
(cf.~Lemma \ref{gongyo}). 
Therefore, we can take a desired $B_Y$ by Theorem \ref{ambro-formula}. 
\end{proof}

From now on, we treat Ambro's canonical bundle formula for 
projective log canonical pairs. We note that 
Theorem \ref{ambro-formula} is sufficient for 
applications in subsequent sections. 

\begin{say}[Observation]\label{44} 
Let $(X, B)$ be a log canonical pair and let $f:X\to Y$ be a projective 
surjective 
morphism between normal projective varieties with connected fibers. 
Assume that 
$K_X+B\sim _{\mathbb Q, f}0$ and 
that $(X, B)$ is kawamata log terminal over the generic 
point of $Y$. 
We can write 
$$
K_X+B\sim _{\mathbb Q} f^*(K_Y+M_Y+\Delta_Y)
$$
where $M_Y$ is the {\em{moduli $\mathbb Q$-divisor}} 
and $\Delta_Y$ is the {\em{discriminant $\mathbb Q$-divisor}}. 
For details, see, for example, \cite{ambro1}. 
It is conjectured that 
we can 
construct 
a commutative 
diagram 
$$
\xymatrix{
   X' \ar[r]^{\nu} \ar[d]_{f'} & X \ar[d]^{f} \\
   Y' \ar[r]_{\mu} & Y,
} 
$$
with the following 
properties. 
\begin{itemize}
\item[(i)] $\nu$ and $\mu$ 
are projective birational. 
\item[(ii)] $X'$ is normal and $K_{X'}+B_{X'}=\nu^*(K_X+B)$. 
\item[(iii)] $K_{X'}+B_{X'}\sim _{\mathbb Q}f'^*(K_{Y'}+M_{Y'}
+\Delta_{Y'})$ 
such that $Y'$ is smooth, the moduli $\mathbb Q$-divisor 
$M_{Y'}$ is semi-ample, and the discriminant 
$\mathbb Q$-divisor $\Delta_{Y'}$ has 
a simple normal crossing support. 
\end{itemize}
In the above properties, 
the non-trivial part is the semi-ampleness of 
$M_{Y'}$. We know that we can construct 
desired commutative diagrams 
of $f': X'\to Y'$ and $f:X\to Y$ when 
\begin{itemize}
\item[(1)] $\dim X-\dim Y=1$ (cf.~\cite[Theorem 5]{kawamata2} and so on), 
\item[(2)] $\dim Y=1$ (cf.~\cite[Theorem 0.1]{ambro1} and 
\cite[Theorem 3.3]{ambro}), 
\item[(3)] general fibers of $f$ are $K3$ surfaces, 
Abelian varieties, or 
smooth surfaces with $\kappa =0$ (cf.~\cite[Theorem 1.2, Theorem 6.3]{fujino-nagoya}), 
\end{itemize} 
and so on. 
We take a general member $D\in |mM_{Y'}|$ of the 
free linear system $|mM_{Y'}|$ where 
$m$ is a sufficiently large and 
divisible integer. 
We put 
$$
K_Y+B_Y=\mu_*(K_{Y'}+\frac{1}{m}D+\Delta_{Y'}). 
$$ 
Then it is easy to see that 
$$\mu^*(K_Y+B_Y)=K_{Y'}+\frac{1}{m}D+\Delta_{Y'},$$ 
$(Y, B_Y)$ is log canonical, and 
$$K_X+B\sim _{\mathbb Q}f^*(K_Y+B_Y).$$ 
By the above observation, we have Ambro's canonical bundle 
formula for projective log canonical pairs under some special assumptions. 

\begin{thm}
Let $(X, B)$ be a projective 
log canonical 
pair and let $f:X\to Y$ be a projective 
surjective morphism onto a normal 
projective variety $Y$ such that 
$K_X+B\sim _{\mathbb Q, f}0$. 
Assume that $\dim Y\leq 1$ or $\dim X-\dim Y\leq 1$. 
Then there exists an effective 
$\mathbb Q$-divisor 
$B_Y$ on $Y$ such that 
$(Y, B_Y)$ is log canonical 
and 
$$
K_X+B\sim _{\mathbb Q}f^*(K_Y+B_Y). 
$$
\end{thm}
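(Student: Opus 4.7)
The plan is to reduce to the setting described in Observation \ref{44} and run the construction sketched there; the dimensional hypothesis enters exactly to supply the semi-ampleness of the moduli divisor required by that construction. Concretely, I would first use Stein factorization together with Lemma \ref{gongyo} to reduce to the case where $f$ has connected fibers, then take a dlt blow-up $\pi:\tilde X\to X$ via \cite[Theorem 10.4]{fujino3}, so that $(\tilde X,\tilde B)$ is dlt with $K_{\tilde X}+\tilde B=\pi^*(K_X+B)$, and split into two subcases according to whether any lc center of $(\tilde X,\tilde B)$ dominates $Y$.

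\emph{Subcase (i): no lc center of $(\tilde X,\tilde B)$ dominates $Y$.} Then $(\tilde X,\tilde B)$ is kawamata log terminal over the generic point of $Y$, so we are exactly in the setting of Observation \ref{44}. Under our assumption, the semi-ampleness of the moduli divisor $M_{Y'}$ is supplied by \cite[Theorem 5]{kawamata2} when $\dim X-\dim Y\leq 1$, and by \cite[Theorem 0.1]{ambro1} together with \cite[Theorem 3.3]{ambro} when $\dim Y\leq 1$. The construction of the Observation then produces $(Y,B_Y)$ log canonical with $K_{\tilde X}+\tilde B\sim_{\mathbb Q}(f\circ\pi)^*(K_Y+B_Y)$, and pushing down along $\pi$ yields the required formula $K_X+B\sim_{\mathbb Q}f^*(K_Y+B_Y)$ on $X$.

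\emph{Subcase (ii): some lc center dominates $Y$.} Pick $W$ minimal among lc centers of $(\tilde X,\tilde B)$ that dominate $Y$. By dlt adjunction $(K_{\tilde X}+\tilde B)|_W=K_W+B_W$ with $(W,B_W)$ log canonical; minimality of $W$ forces every smaller lc center of $(\tilde X,\tilde B)$ contained in $W$ to be non-dominant, so $(W,B_W)$ is klt over the generic point of $Y$. Taking the Stein factorization $W\to Y'\to Y$ of $f\circ\pi|_W$, the dimensional hypothesis transfers because $\dim Y'=\dim Y$ and $\dim W-\dim Y'\leq\dim X-\dim Y$, so Subcase (i) applied to $(W,B_W)\to Y'$ yields $(Y',B_{Y'})$ log canonical with $K_W+B_W\sim_{\mathbb Q}(W\to Y')^*(K_{Y'}+B_{Y'})$. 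Lemma \ref{gongyo} then carries this across the finite morphism $Y'\to Y$ to produce $(Y,B_Y)$ log canonical with $K_W+B_W\sim_{\mathbb Q}(f\circ\pi|_W)^*(K_Y+B_Y)$. Combining with $K_{\tilde X}+\tilde B\sim_{\mathbb Q,f\circ\pi}0$, the equivalence descends from $W$ to $\tilde X$ via the projection formula (using that $W\to Y'$ has connected fibers and $Y'\to Y$ is finite surjective), and then to $X$ via $\pi_*$.

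The main obstacle is the semi-ampleness of the moduli divisor $M_{Y'}$ used inside Observation \ref{44}; this is the deep input, and is precisely why the dimensional restriction appears in the statement. Once it is granted, the rest is a routine combination of dlt adjunction, Stein factorization, and the Main Lemma.
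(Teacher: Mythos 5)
Your proposal is correct and follows essentially the same route as the paper's own proof: a dlt blow-up, replacement by a (minimal dominant) lc center together with a Stein factorization whose finite part is handled by Lemma \ref{gongyo}, and then the construction of Observation \ref{44} in the remaining case $\dim Y=1$ or $\dim X-\dim Y=1$. The paper states this argument very tersely, and your two-subcase write-up is simply a fleshed-out version of the same reduction.
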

\begin{proof}By taking 
a dlt blow-up (cf.~\cite[Theorem 10.4]{fujino3}), we can 
assume that $(X, B)$ is dlt. 
If necessary, by replacing $(X, B)$ with 
a suitable lc center of $(X, B)$ and by taking the Stein 
factorization 
(cf.~Lemma \ref{gongyo}), 
we can assume that $f:X\to Y$ has 
connected fibers and that $(X, B)$ is kawamata log terminal over 
the generic point of $Y$. 
We note that we can assume that 
$\dim Y=1$ or $\dim X-\dim Y=1$. 
By the arguments in 
\ref{44}, we can find an effective 
$\mathbb Q$-divisor $B_Y$ on $Y$ such that 
$(Y, B_Y)$ is log canonical and 
that $K_X+B\sim _{\mathbb Q}f^*(K_Y+B_Y)$. 
\end{proof} 
\end{say}

\section{Subadjunction for minimal log canonical centers}\label{sec3}

The following theorem is a generalization of Kawamata's subadjunction formula 
(cf.~\cite[Theorem 1]{kawamata}). 
Theorem \ref{main} is new even for threefolds. 
It is an answer to Kawamata's question (cf.~\cite[Question 1.8]{kawamata1}). 

\begin{thm}[Subadjunction formula for minimal lc centers]\label{main} 
Let $\mathbb K$ be the rational number field $\mathbb Q$ or the 
real number field $\mathbb R$. 
Let $X$ be a normal projective variety and let $D$ be 
an effective $\mathbb K$-divisor on $X$  such that 
$(X, D)$ is log canonical. 
Let $W$ be a minimal log canonical center with respect to 
$(X, D)$. 
Then there exists an effective $\mathbb K$-divisor $D_W$ on $W$ such that 
$$
(K_X+D)|_W\sim _{\mathbb K} K_W+D_W
$$
and that the pair $(W, D_W)$ is kawamata log terminal. 
In particular, $W$ has only rational singularities. 
\end{thm}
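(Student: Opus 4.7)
The plan is to reduce the theorem to a combination of Ambro's canonical bundle formula (Theorem~\ref{ambro-formula}) and the Main Lemma (Lemma~\ref{gongyo}), applied to a projective surjective morphism $f\colon W'\to W$ arising from a suitable dlt modification of $(X,D)$.

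First I would take a dlt blow-up $\pi\colon (X',B')\to (X,D)$ with $K_{X'}+B'=\pi^{*}(K_X+D)$ (cf.~\cite[Theorem 10.4]{fujino3}). Since every lc center of $(X,D)$ is the image of some lc center of $(X',B')$, I can choose a \emph{minimal} lc center $W'$ of $(X',B')$ that is contained in $\pi^{-1}(W)$; then $\pi(W')\subseteq W$ is an lc center of $(X,D)$, and the minimality of $W$ forces $\pi(W')=W$. Because $(X',B')$ is dlt, $W'$ is normal and dlt adjunction yields an effective $\mathbb K$-divisor $B_{W'}$ on $W'$ with
$$
(K_{X'}+B')|_{W'}=K_{W'}+B_{W'}.
$$
The crucial point is that $(W',B_{W'})$ is in fact \emph{kawamata log terminal}: any lc center of $(W',B_{W'})$ would produce a strictly smaller lc center of the dlt pair $(X',B')$, contradicting the minimality of $W'$.

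Next, set $f:=\pi|_{W'}\colon W'\to W$ and take its Stein factorization $f\colon W'\xrightarrow{g} Z\xrightarrow{h} W$, so that $g$ has connected fibers and $h$ is finite; here $Z$ is automatically normal and projective. From the equality $(K_{X'}+B')|_{W'}=f^{*}((K_X+D)|_W)$ we obtain $K_{W'}+B_{W'}\sim_{\mathbb K,g}0$, so Theorem~\ref{ambro-formula} applied to $g$ produces an effective $\mathbb K$-divisor $B_Z$ on $Z$ with $(Z,B_Z)$ klt and $K_{W'}+B_{W'}\sim_{\mathbb K}g^{*}(K_Z+B_Z)$. Comparing with the pullback from $W$ forces $K_Z+B_Z\sim_{\mathbb K,h}0$, and then Lemma~\ref{gongyo} applied to the finite morphism $h$ yields an effective $\mathbb K$-divisor $D_W$ on $W$ with $(W,D_W)$ klt and $K_Z+B_Z\sim_{\mathbb K}h^{*}(K_W+D_W)$. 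Pulling back by $g$ and cancelling the surjective pullback $f^{*}$ gives $(K_X+D)|_W\sim_{\mathbb K}K_W+D_W$, as desired; the rationality of singularities of $W$ then follows from $(W,D_W)$ being klt.

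The main technical obstacle is in the first step: verifying that, for the chosen minimal lc center $W'$ of the dlt pair, the adjunction divisor $B_{W'}$ exists and makes $(W',B_{W'})$ kawamata log terminal. This rests on the standard structure theory of minimal lc centers on dlt pairs, namely that they are normal and inherit a klt structure via adjunction. Once this input is granted, the rest is a clean two-step application of the canonical bundle formula: Ambro's formula handles the connected-fibers piece $g$, and the Main Lemma handles the finite piece $h$, with the minimality of $W$ ensuring that the klt property propagates all the way down to $W$.
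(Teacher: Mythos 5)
Your proposal is correct and follows essentially the same route as the paper's own proof: dlt blow-up, passage to a minimal lc center of the dlt pair (which is normal and klt by adjunction), Stein factorization of its map to $W$, Ambro's formula (Theorem \ref{ambro-formula}) for the connected-fibers part, and Lemma \ref{gongyo} for the finite part. The only cosmetic difference is notational, plus your slightly more explicit justification that the chosen minimal lc center upstairs surjects onto $W$.
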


\begin{rem}\label{32} 
In \cite[Theorem 1]{kawamata}, 
Kawamata proved 
$$
(K_X+D+\varepsilon H)|_W\sim _{\mathbb Q}K_W+D_W, 
$$ 
where $H$ is an ample Cartier divisor on $X$ and 
$\varepsilon$ is a positive rational number, 
under the extra assumption that 
$D$ is a $\mathbb Q$-divisor and there exists an effective $\mathbb Q$-divisor 
$D^o$ such that $D^o<D$ and that $(X, D^o)$ is kawamata log terminal. 
Therefore, Kawamata's theorem claims nothing when $D=0$. 
\end{rem}

\begin{proof}[Proof of {\em{Theorem \ref{main}}}]
By taking a dlt blow-up (cf.~\cite[Theorem 10.4]{fujino3}), 
we can take a projective birational morphism 
$f:Y\to X$ from a normal projective variety $Y$ with the 
following properties. 
\begin{itemize}
\item[(i)] $K_Y+D_Y=f^*(K_X+D)$. 
\item[(ii)] $(Y, D_Y)$ is a $\mathbb Q$-factorial 
dlt pair. 
\end{itemize}
We can take a minimal lc center $Z$ of $(Y, D_Y)$ such that 
$f(Z)=W$. 
We note that 
$K_Z+D_Z=(K_Y+D_Y)|_Z$ is klt since 
$Z$ is a minimal lc center of the dlt pair 
$(Y, D_Y)$. 
Let 
$$
\begin{CD}
f:Z@>{g}>> V@>{h}>> W
\end{CD}
$$ 
be the Stein factorization of $f: Z\to W$. 
By the construction, we can write 
$$
K_Z+D_Z\sim _{\mathbb K}f^*A 
$$
where $A$ is a $\mathbb K$-divisor 
on $W$ such that 
$A\sim _{\mathbb K} (K_X+D)|_W$. 
We note that $W$ is normal (cf.~\cite[Theorem 2.4 (4)]{fujino2}).  
Since $(Z, D_Z$) is klt, 
we can take an effective $\mathbb K$-divisor 
$D_V$ on $V$ such that 
$$
K_V+D_V\sim _{\mathbb K}h^*A$$ 
and that $(V, D_V)$ is klt by Theorem \ref{ambro-formula}. 
By Lemma \ref{gongyo}, 
we can find an effective $\mathbb K$-divisor 
$D_W$ on $W$ such that 
$$
K_W+D_W\sim _{\mathbb K} A\sim_{\mathbb K} 
(K_X+D)|_W 
$$ and 
that $(W, D_W)$ is klt. 
\end{proof}

\section{Log Fano varieties}\label{sec4}

In this section, we give an easy application of Lemma \ref{gongyo}. 
Theorem \ref{41} is an answer to the question raised by 
Karl Schwede 
(cf.~\cite[Remark 6.5]{ss}). For related topics, see \cite[Section 3]{fujino-gongyo}. 

\begin{thm}\label{41}
Let $(X, \Delta)$ be a projective 
klt pair such that 
$-(K_X+\Delta)$ is ample. 
Let $f:X\to Y$ be a generically finite 
surjective morphism to a normal projective 
variety $Y$. 
Then we can find an effective $\mathbb Q$-divisor 
$\Delta_Y$ on $Y$ such that 
$(Y, \Delta_Y)$ is klt and 
$-(K_Y+\Delta_Y)$ is ample. 
\end{thm}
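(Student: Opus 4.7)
The plan is to reduce Theorem \ref{41} to the Main Lemma (Lemma \ref{gongyo}) by a standard perturbation trick. Since $-(K_X+\Delta)$ is ample rather than $f$-trivial, the first move is to modify $\Delta$ by an effective $\mathbb Q$-divisor whose class cancels $K_X+\Delta$ up to something pulled back from $Y$, so that Lemma \ref{gongyo} becomes applicable.

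First, I would fix an ample $\mathbb Q$-divisor $A$ on $Y$ small enough that
$$
L := -(K_X+\Delta) - f^*A
$$
is still ample on $X$; this is possible because $f^*A$ is nef and $-(K_X+\Delta)$ is ample, so one may start from any ample class on $Y$ and rescale by a small positive rational. By Bertini applied to a sufficiently divisible and very ample integral multiple of $L$, I can choose an effective $\mathbb Q$-divisor $E \sim_{\mathbb Q} L$ such that $(X, \Delta + E)$ remains kawamata log terminal. Then
$$
K_X + \Delta + E \sim_{\mathbb Q} -f^*A = f^*(-A),
$$
so in particular $K_X+\Delta+E \sim_{\mathbb Q, f} 0$.

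Next, I would apply Lemma \ref{gongyo} to the generically finite proper surjective morphism $f\colon (X, \Delta + E) \to Y$. Since $(X, \Delta + E)$ is klt, the ``moreover'' clause produces an effective $\mathbb Q$-divisor $\Delta_Y$ on $Y$ such that $(Y, \Delta_Y)$ is klt and
$$
K_X + \Delta + E \sim_{\mathbb Q} f^*(K_Y + \Delta_Y).
$$
Comparing the two equivalences above yields $f^*(K_Y + \Delta_Y + A) \sim_{\mathbb Q} 0$.

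Finally, I would descend this relation to $Y$. Take the Stein factorization $f = h \circ g$, where $g$ has connected fibers (hence $g_*\mathcal O = \mathcal O$) and $h$ is finite surjective. Injectivity of $g^*$ on $\mathbb Q$-linear equivalence classes of $\mathbb Q$-Cartier divisors gives $h^*(K_Y + \Delta_Y + A) \sim_{\mathbb Q} 0$, and pushforward by the finite morphism $h$ yields $\deg(h)\cdot (K_Y + \Delta_Y + A) \sim_{\mathbb Q} 0$, so $K_Y + \Delta_Y + A \sim_{\mathbb Q} 0$. Therefore $-(K_Y + \Delta_Y) \sim_{\mathbb Q} A$ is ample, which together with $(Y, \Delta_Y)$ being klt is the required conclusion. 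The only mildly delicate step is the Bertini perturbation preserving klt; the descent is routine once one factors through Stein. The conceptual core is simply that Lemma \ref{gongyo} converts an $f$-trivial klt pair upstairs into a klt pair with the same $\mathbb Q$-class downstairs, and ampleness of $A$ is inherited by $-(K_Y+\Delta_Y)$.
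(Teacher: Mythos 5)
Your proposal is correct and follows essentially the same route as the paper: perturb $\Delta$ by a general effective $\mathbb Q$-divisor in the class of $-(K_X+\Delta)$ minus a small pullback of an ample divisor from $Y$, so that $K_X+\Delta+E\sim_{\mathbb Q}f^*(-A)$, then apply Lemma \ref{gongyo} and read off that $-(K_Y+\Delta_Y)\sim_{\mathbb Q}A$ is ample. The only point the paper adds is a one-sentence preliminary reduction to $\mathbb Q$-coefficients (perturbing $\Delta$ in case it is an $\mathbb R$-divisor); otherwise your argument, including the descent of the $\mathbb Q$-linear equivalence to $Y$, matches the published proof.
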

\begin{proof}
Without loss of generality, we can assume that 
$\Delta$ is a $\mathbb Q$-divisor by perturbing 
the coefficients of $\Delta$. 
Let $H$ be a general very ample Cartier divisor 
on $Y$ and let $\varepsilon$ be a sufficiently 
small positive 
rational number. 
Then $K_X+\Delta+\varepsilon f^*H$ is anti-ample and $(X, \Delta+\varepsilon f^*H)$ is klt. 
We can take an effective 
$\mathbb Q$-divisor 
$\Theta$ on $X$ such that 
$m\Theta$ is a general 
member of the free linear system $|-m(K_X+\Delta+\varepsilon f^*H)|$ where 
$m$ is a sufficiently large and divisible integer. 
Then $$K_X+\Delta+\varepsilon f^*H+\Theta\sim _{\mathbb Q}0.$$  
Let $\delta$ be a positive rational number 
such that 
$0<\delta<\varepsilon$. 
Then $$K_X+\Delta+(\varepsilon -\delta)f^*H+\Theta\sim _{\mathbb Q}
f^*(-\delta H).$$  
By Lemma \ref{gongyo}, 
we can find an effective $\mathbb Q$-divisor 
$\Delta_Y$ on $Y$ such that 
$$K_Y+\Delta_Y\sim _{\mathbb Q}-\delta H$$ and 
that $(Y, \Delta_Y)$ is klt. 
We note that $$
-(K_Y+\Delta_Y)\sim _{\mathbb Q} \delta H
$$ 
is ample. 
\end{proof}

By combining Theorem \ref{41} with \cite[Theorem 3.1]{fujino-gongyo}, we can 
easily obtain the following corollary. 

\begin{cor}
Let $(X, \Delta)$ be a projective klt pair 
such that 
$-(K_X+\Delta)$ is ample. 
Let $f:X\to Y$ be a projective 
surjective morphism 
onto a normal projective variety $Y$. 
Then we can find an effective $\mathbb Q$-divisor 
$\Delta_Y$ on $Y$ such that 
$(Y, \Delta_Y)$ is klt and $-(K_Y+\Delta_Y)$ is ample.  
\end{cor}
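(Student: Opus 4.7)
The plan is to reduce the general case of a projective surjective morphism to a composition of two cases already handled: a contraction (connected fibers) and a generically finite morphism. The precise mechanism is Stein factorization.

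First, I would take the Stein factorization
\[
\xymatrix{
f : X \ar[r]^{g} & Z \ar[r]^{h} & Y,
}
\]
where $Z$ is a normal projective variety, $g : X \to Z$ is a projective surjective morphism with connected fibers, and $h : Z \to Y$ is a finite surjective morphism. Both $g$ and $h$ are projective.

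Second, I would apply \cite[Theorem 3.1]{fujino-gongyo} to the contraction $g : X \to Z$ and the klt pair $(X, \Delta)$ with $-(K_X+\Delta)$ ample. That theorem is precisely the statement that images of log Fano pairs under projective surjective morphisms \emph{with connected fibers} are again log Fano, so it produces an effective $\mathbb Q$-divisor $\Delta_Z$ on $Z$ such that $(Z, \Delta_Z)$ is klt and $-(K_Z+\Delta_Z)$ is ample.

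Third, since $h : Z \to Y$ is finite (in particular generically finite) and $(Z, \Delta_Z)$ is a klt log Fano pair, I would apply Theorem \ref{41} to $h$ and $(Z, \Delta_Z)$ to extract an effective $\mathbb Q$-divisor $\Delta_Y$ on $Y$ such that $(Y, \Delta_Y)$ is klt and $-(K_Y + \Delta_Y)$ is ample. This $\Delta_Y$ is the desired divisor.

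There is no real obstacle here: both ingredients have been prepared. The content of the corollary is entirely that the two-step decomposition via Stein factorization is compatible with the two input theorems, each of which handles one step. The only thing to verify is that the intermediate variety $Z$ and the divisor $\Delta_Z$ supplied by \cite[Theorem 3.1]{fujino-gongyo} satisfy the hypotheses of Theorem \ref{41}, which they manifestly do (normality of $Z$ is automatic from Stein factorization, and klt plus anti-ample log canonical divisor is exactly what Theorem \ref{41} requires on the source).
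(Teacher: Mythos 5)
Your proposal is correct and is exactly the argument the paper intends: the paper gives no written proof beyond ``combine Theorem \ref{41} with \cite[Theorem 3.1]{fujino-gongyo},'' and your Stein factorization into a connected-fiber contraction (handled by \cite[Theorem 3.1]{fujino-gongyo}) followed by a finite morphism (handled by Theorem \ref{41}) is precisely that combination.
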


\section{Non-vanishing theorem for log canonical pairs}\label{sec5}
The following theorem is the main result of \cite{fujino2}. 
It is almost equivalent to the base point free theorem for 
log canonical pairs. 
For details, see \cite{fujino2}.  

\begin{thm}[Non-vanishing theorem]\label{non}  
Let $X$ be a normal projective 
variety and let $B$ be an effective $\mathbb Q$-divisor on $X$ such that $(X, B)$ 
is log canonical.
Let $L$ be a nef Cartier divisor on $X$. Assume that $aL-(K_X+B)$ 
is ample for some $a>0$. 
Then the base locus of the linear system $|mL|$ contains no lc 
centers of $(X, B)$ for every $m\gg 0$, 
that is, there is a positive integer $m_0$ such
that $\Bs |mL|$contains no lc centers of $(X, B)$ for every $m\geq m_0$.
\end{thm}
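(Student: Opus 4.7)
The plan is to reduce the claim to the already-known klt non-vanishing theorem by passing to a minimal lc center via Theorem~\ref{main-in}, and then to lift sections back to $X$ using a vanishing theorem for log canonical pairs. Since there are only finitely many minimal lc centers of $(X,B)$ and every lc center contains a minimal one, it is enough to show that for each \emph{minimal} lc center $W$ one has $W\not\subseteq \Bs|mL|$ for $m\gg 0$; a common $m_0$ then works uniformly for all lc centers.

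Fix such a $W$. Applying Theorem~\ref{main-in} to $(X,B)$ produces an effective $\mathbb Q$-divisor $D_W$ on $W$ with $(W,D_W)$ klt and
$$K_W+D_W \sim_{\mathbb Q} (K_X+B)|_W.$$
Since $aL-(K_X+B)$ is ample, its restriction to $W$ is ample, so $aL|_W-(K_W+D_W)$ is ample on $W$, and $L|_W$ is a nef Cartier divisor. Shokurov's classical non-vanishing theorem applied to the klt pair $(W,D_W)$ and $L|_W$ then yields a nonzero section $s_W\in H^0(W, mL|_W)$ for all sufficiently large $m$.

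The remaining step is to lift $s_W$ to a section of $mL$ on $X$ not vanishing identically on $W$. Choosing a suitable coherent ideal sheaf $\mathcal I$ isolating $W$ inside the non-klt locus of $(X,B)$---for instance, by working with a variant of the non-klt ideal combined with an inductive argument that handles the other minimal lc centers---one is reduced to the vanishing $H^1(X,\mathcal I\otimes\mathcal O_X(mL))=0$. For $m\geq a$, we have $mL-(K_X+B)=(aL-(K_X+B))+(m-a)L$, which is ample plus nef and hence ample; the required vanishing is then a consequence of the Kawamata--Viehweg type vanishing theorem for log canonical pairs.

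The main obstacle is precisely this lifting step: in the log canonical, non-klt setting, multiplier ideals and classical Nadel vanishing do not directly apply, and one must invoke the vanishing and torsion-freeness theorems for lc pairs that form the technical heart of the theory. Once that input is granted, the reduction to the klt case via Theorem~\ref{main-in} together with Shokurov non-vanishing produces the result without further difficulty.
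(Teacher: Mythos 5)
Your proposal follows the paper's proof essentially verbatim: reduce to a minimal lc center $W$, apply the subadjunction formula (Theorem \ref{main-in}) to obtain a klt pair $(W,D_W)$ with $aL|_W-(K_W+D_W)$ ample, produce a nonzero section of $mL|_W$ by the klt non-vanishing/base point free theorem, and lift it to $X$. The lifting step you single out as the main obstacle is exactly what the paper invokes as a black box, namely \cite[Theorem 2.2]{fujino2}, which gives the surjectivity of $H^0(X,\mathcal O_X(mL))\to H^0(W,\mathcal O_W(mL))$ for $m\geq a$ via the vanishing and torsion-freeness theorems for lc pairs, so no further argument is required there.
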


Here, we give a quick proof of Theorem \ref{non} by using Theorem \ref{main}. 

\begin{proof}
Let $W$ be any minimal lc center of the pair $(X, B)$. 
It is sufficient to prove that 
$W$ is not contained in $\Bs|mL|$ for $m\gg 0$. 
By Theorem \ref{main}, we can find an effective $\mathbb Q$-divisor 
$B_W$ on $W$ such that 
$(W, B_W)$ is klt and $K_W+D_W\sim _{\mathbb Q}(K_X+B)|_W$. 
Therefore, 
$aL|_W-(K_W+B_W)\sim _{\mathbb Q}(aL-(K_X+B))|_W$ 
is ample. 
By the Kawamata--Shokurov base point free theorem, 
$|mL|_W|$ is free for $m \gg 0$. 
By \cite[Theorem 2.2]{fujino2}, 
$$
H^0(X, \mathcal O_X(mL))\to H^0(W, \mathcal O_W(mL))
$$ 
is surjective 
for $m \geq a$. 
Therefore, $W$ is not contained in $\Bs|mL|$ for 
$m \gg 0$. 
\end{proof}

\begin{rem}
The above proof of Theorem \ref{non} 
is shorter than the original proof in \cite{fujino2}. 
However, the proof of Theorem \ref{main} depends on 
very deep results such 
as existence of 
dlt blow-ups. 
The proof of Theorem \ref{non} 
in \cite{fujino2} only depends on various well-prepared vanishing theorems and 
standard techniques. 
\end{rem}

\section{Subadjunction formula:~local version}\label{sec7} 

In this section, we give a local version of 
our subadjunction formula for minimal log canonical 
centers. Theorem \ref{local} is a local version of 
Ambro's canonical bundle formula for kawamata log terminal 
pairs:~Theorem \ref{ambro-formula}.  
It is essentially \cite[Theorem 1.2]{fujino1}. 

\begin{thm}\label{local} 
Let $\mathbb K$ be 
the rational number field $\mathbb Q$ or 
the real number field $\mathbb R$. 
Let $(X, B)$ be a kawamata log terminal 
pair and let $f:X\to Y$ be a proper surjective 
morphism onto a normal {\em{affine}} variety 
$Y$ with connected fibers. 
Assume that 
$$
K_X+B\sim _{\mathbb K, f}0. 
$$
Then there exists an effective $\mathbb K$-divisor 
$B_Y$ on $Y$ such that 
$(Y, B_Y)$ is klt and 
$$K_X+B~\sim _{\mathbb K}f^*(K_Y+B_Y).$$  
\end{thm}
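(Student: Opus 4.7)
The plan is to imitate the proof of Theorem~\ref{ambro-formula}: reduce the real case to the rational case via a Shokurov rational polytope decomposition, and dispose of the rational case by quoting the first author's earlier work. Since $Y$ is affine, Chow's lemma provides a relatively projective birational modification whenever projectivity of $f$ is required, so I freely assume $f$ is projective in what follows.

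If $\mathbb K = \mathbb Q$, the desired conclusion is essentially \cite[Theorem 1.2]{fujino1}. Assume from now on that $\mathbb K = \mathbb R$, and let $V = \bigoplus_i \mathbb R B_i$ where $\sum_i B_i$ is the irreducible decomposition of $\Supp B$. Standard Shokurov-type arguments in the relative setting show that
$$
\mathcal L = \{\Delta \in V \mid (X,\Delta)\text{ is klt}\}
\quad\text{and}\quad
\mathcal N = \{\Delta \in \mathcal L \mid K_X+\Delta\text{ is }f\text{-nef}\}
$$
are rational polytopes, and $B \in \mathcal N$ since $K_X + B \sim_{\mathbb R, f} 0$. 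I may therefore write
$$
K_X + B = \sum_{i=1}^k r_i(K_X + \Delta_i)
$$
with each $\Delta_i \in \mathcal N$ an effective $\mathbb Q$-divisor, $(X, \Delta_i)$ klt, $0 < r_i < 1$, and $\sum r_i = 1$.

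Each $K_X + \Delta_i$ is $f$-nef, and because their convex combination $K_X + B$ is $f$-numerically trivial, each $K_X + \Delta_i$ is itself $f$-numerically trivial. Applying~\cite[Chapter V 2.9.~Corollary]{nakayama} to the klt generic fiber $X_\eta$ gives $\kappa(X_\eta, (K_X + \Delta_i)_\eta) = \nu(X_\eta, (K_X + \Delta_i)_\eta) = 0$, whence \cite[Theorem 1.1]{fujino-kawamata} upgrades numerical $f$-triviality to $K_X + \Delta_i \sim_{\mathbb Q, f} 0$. Applying the $\mathbb K = \mathbb Q$ case to each $(X, \Delta_i)$ produces an effective $\mathbb Q$-divisor $\Theta_i$ on $Y$ with $(Y, \Theta_i)$ klt and $K_X + \Delta_i \sim_{\mathbb Q} f^*(K_Y + \Theta_i)$. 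Setting $B_Y = \sum_{i=1}^k r_i \Theta_i$ yields $K_X + B \sim_{\mathbb R} f^*(K_Y + B_Y)$, and $(Y, B_Y)$ is klt because discrepancies are linear in the boundary on a common log resolution of $\sum_i \Theta_i$.

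The step I expect to require the most care is the reduction from proper to projective $f$ via Chow's lemma: although the Shokurov polytope argument and the Fujino--Kawamata numerical-to-linear upgrade are well suited to the relative projective setting, one must check that passing to a birational model $X' \to X$ with $K_{X'} + B_{X'} = \nu^*(K_X + B)$ preserves kltness and relative $\mathbb K$-triviality, and that a $B_Y$ produced for the modified data still descends to work for the original $f\colon X \to Y$. Once this bookkeeping is in place, the argument becomes essentially parallel to that of Theorem~\ref{ambro-formula}.
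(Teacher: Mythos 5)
Your proposal matches the paper's own (sketched) proof: the $\mathbb K=\mathbb Q$ case is disposed of by citing \cite[Theorem 1.2]{fujino1} (the paper notes only the minor modifications that $M$ there is $\mu$-nef and that one may take $H=0$ since $Y$ is affine), and the $\mathbb K=\mathbb R$ case is reduced to it by exactly the Shokurov polytope decomposition used in the proof of Theorem \ref{ambro-formula}. The Chow's lemma preamble you add is a reasonable extra precaution but is not part of the paper's argument, which simply runs the relative version of the same reduction for the proper morphism $f$ directly.
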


\begin{proof}[Sketch of the proof]
First, we assume that $\mathbb K=\mathbb Q$. 
In this case, the proof of \cite[Theorem 1.2]{fujino1} 
works with some minor modifications. 
We note that $M$ in the proof of \cite[Theorem 1.2]{fujino1} 
is $\mu$-nef. We also note that 
we can assume $H=0$ in \cite[Theorem 1.2]{fujino1} since 
$Y$ is affine. 
Next, we assume that $\mathbb K=\mathbb R$. 
In this case, the reduction argument in the proof of Theorem \ref{ambro-formula} 
can be applied. 
So, we obtain the desired formula. 
\end{proof}

By Theorem \ref{local}, we can obtain a local version of 
Theorem \ref{main}. 
The proof of Theorem \ref{main} works without any modifications. 

\begin{thm}[Subadjunction formula for minimal lc centers:~local version]\label{72} 
Let $\mathbb K$ be the rational number field $\mathbb Q$ or the 
real number field $\mathbb R$. 
Let $X$ be a normal {\em{affine}} variety and let $D$ be 
an effective $\mathbb K$-divisor on $X$  such that 
$(X, D)$ is log canonical. 
Let $W$ be a minimal log canonical center with respect to 
$(X, D)$. 
Then there exists an effective $\mathbb K$-divisor $D_W$ on $W$ such that 
$$
(K_X+D)|_W\sim _{\mathbb K} K_W+D_W
$$
and that the pair $(W, D_W)$ is kawamata log terminal. 
In particular, $W$ has only rational singularities. 
\end{thm}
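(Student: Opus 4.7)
The plan is to mimic the proof of Theorem \ref{main} almost verbatim, substituting the local Ambro-type formula Theorem \ref{local} for the projective version Theorem \ref{ambro-formula}, and checking that the affine hypothesis on $X$ propagates through the construction.

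First I would apply a dlt blow-up (cf.~\cite[Theorem 10.4]{fujino3}) to obtain a projective birational morphism $f: Y \to X$ from a $\mathbb Q$-factorial dlt pair $(Y, D_Y)$ with $K_Y+D_Y=f^*(K_X+D)$, and then choose a minimal lc center $Z$ of $(Y, D_Y)$ with $f(Z)=W$. By dlt-ness, $K_Z+D_Z:=(K_Y+D_Y)|_Z$ is klt; restricting the pullback identity to $Z$ gives $K_Z+D_Z\sim_{\mathbb K}(f|_Z)^*A$ with $A:=(K_X+D)|_W$, and $W$ is normal by \cite[Theorem 2.4 (4)]{fujino2}.

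Next I would take the Stein factorization $f|_Z:Z\xrightarrow{g}V\xrightarrow{h}W$. This is the only step at which the affineness of $X$ plays a role: $W$ is a closed subvariety of the affine variety $X$, hence affine, and since $h$ is finite, $V$ is affine too. Thus Theorem \ref{local} applies to $g:Z\to V$ (proper with connected fibers, and $K_Z+D_Z\sim_{\mathbb K,g}0$ since $K_Z+D_Z$ is pulled back from $W$), producing an effective $\mathbb K$-divisor $D_V$ on $V$ with $(V, D_V)$ klt and $K_V+D_V\sim_{\mathbb K}h^*A$. In particular $K_V+D_V\sim_{\mathbb K,h}0$.

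Finally, Lemma \ref{gongyo} applied to the finite morphism $h:V\to W$ yields an effective $\mathbb K$-divisor $D_W$ on $W$ with $(W, D_W)$ klt and $K_V+D_V\sim_{\mathbb K}h^*(K_W+D_W)$. Comparing with the previous identity and using that $h^*$ is injective on $\mathbb K$-linear equivalence classes (since $h$ is finite and surjective) gives $K_W+D_W\sim_{\mathbb K}A\sim_{\mathbb K}(K_X+D)|_W$, as required, and the rational singularities statement follows from the standard fact that klt pairs have only rational singularities. I expect no real obstacle: beyond a literal transcription of the proof of Theorem \ref{main}, the only thing to verify is the affineness of $V$, which as noted above is automatic.
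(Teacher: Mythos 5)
Your proposal matches the paper's intended argument exactly: the paper proves Theorem \ref{72} by remarking that the proof of Theorem \ref{main} goes through verbatim once Theorem \ref{local} is substituted for Theorem \ref{ambro-formula}, which is precisely your plan. Your explicit check that $V$ is affine (as a finite cover of the closed subvariety $W$ of the affine $X$), so that Theorem \ref{local} applies, is the one point the paper leaves implicit, and you handle it correctly.
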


Theorem \ref{72} does not directly follow from Theorem \ref{main}. 
It is because we do not know how to compactify log canonical pairs. 

%%%%%%%%%%%%%%%%%%%%%%%%%%%

\end{document}